\documentclass[a4paper,11pt]{amsart}


\usepackage{amsmath,amsfonts,amssymb,amsthm}
\usepackage{mathrsfs} 
\usepackage{extarrows} 

\usepackage[x11names,rgb,table]{xcolor}
\usepackage[naturalnames=true]{hyperref}
\hypersetup{
  colorlinks = true,          
  urlcolor = DeepSkyBlue4, linkcolor = Chartreuse4, citecolor = DarkOrange2
}

\usepackage{tikz}
\usepackage{tikz-cd}
\usetikzlibrary{calc,decorations.pathmorphing}
\usepackage{graphicx}
\usepackage{multicol}
\usepackage{faktor}
\usepackage{csquotes}
\usepackage{xspace}
\usepackage{bm}

\usepackage{listings}
\lstdefinelanguage{pmshell}
{
  basicstyle=\footnotesize\ttfamily,
  numbers=none,
  captionpos=b,
  showspaces=false,
  showstringspaces=false,
  escapechar=&,
  backgroundcolor=\color{black!10},
  framexleftmargin = 0.5em,
  framexrightmargin = 0.5em,
  framextopmargin = 0.5em,
  framexbottommargin = 0.5em,
  frame=single,
  framerule=0pt,
}
\lstdefinelanguage{pmoutput}
{
  basicstyle=\footnotesize\ttfamily,
  numbers=none,
  captionpos=b,
  showspaces=false,
  showstringspaces=false,
  escapechar=&,
}

\usepackage[colorinlistoftodos, bordercolor=orange, backgroundcolor=orange!20, linecolor=orange, textsize=scriptsize
]{todonotes}

\usetikzlibrary{circuits.logic.US,circuits.logic.IEC,fit}
\newcommand\addvmargin[1]{
  \node[fit=(current bounding box),inner ysep=#1,inner xsep=0]{};
}

\newtheorem{theorem}{Theorem}[section]

\theoremstyle{definition}

\newtheorem{example}[theorem]{Example}

\newtheorem{question}[theorem]{Question}
\newtheorem{remark}[theorem]{Remark}

\newcommand{\MM}{\mathbb{M}}

\newcommand{\RR}{\mathbb{R}}

\newcommand{\ZZ}{\mathbb{Z}}

\newcommand\polymake{\texttt{polymake}\xspace}

\newcommand\1{\textbf{1}}

\newcommand\SetOf[2]{\left\{#1 \mid #2\right\}}
\newcommand\smallSetOf[2]{\{#1 \mid #2\}}

\DeclareMathOperator{\Aut}{Aut}

\DeclareMathOperator{\Sym}{Sym}
\DeclareMathOperator{\conv}{conv}
\DeclareMathOperator{\pl}{pl}

\DeclareMathSymbol{\dash}{\mathord}{operators}{`\-}
\DeclareMathOperator{\GL}{GL}
\DeclareMathOperator{\link}{lk}

\newcommand\doi[1]{\href{http://dx.doi.org/#1}{\texttt{doi:#1}}}

\hyphenation{ho-mo-lo-gy}

\title{Stacky fans and tropical moduli in \polymake}
\author{Dominic Bunnett \and Michael Joswig \and Julian Pfeifle}

\address[Bunnett]{
  TU Berlin, Chair of Discrete Mathematics/Geometry
}
\email{bunnett@math.tu-berlin.de}
\address[Joswig]{
  TU Berlin, Chair of Discrete Mathematics/Geometry, and Max-Planck Institute for Mathematics in the Sciences, Leipzig
}
\email{joswig@math.tu-berlin.de}
\address[Pfeifle]{
  Departament de Matem\`atiques, Universitat Polit\`ecnica de Catalunya
}
\email{julian.pfeifle@upc.edu}

\thanks{%
  D.~Bunnett and M.~Joswig have been supported by Deutsche Forschungsgemeinschaft (EXC 2046: \enquote{MATH$^+$}, SFB-TRR 195: \enquote{Symbolic Tools in Mathematics and their Application}, and GRK 2434: \enquote{Facets of Complexity}).
  J.~Pfeifle has been supported by the grant PID2019-106188GB-I00 from the
Spanish Ministry of Education (MEC)
}

\begin{document}

\begin{abstract}
  We investigate geometric embeddings among several classes of stacky fans and algorithms, e.g., to compute their homology.
  Interesting cases arise from moduli spaces of tropical curves.
  Specifically, we show that the tropical honeycomb curves form a contractible sub-locus in the moduli of all tropical $K_4$-curves.
\end{abstract}

\maketitle

\section{Introduction}
Moduli spaces of algebraic curves and their compactifications form a research direction in algebraic geometry of ongoing interest.
In recent years many researchers contributed to understanding these classic moduli spaces via moduli spaces of tropical curves \cite{AbramoichCaporasoPayne:2015},\cite{ChanGalatiusPayne:2021}.
The latter are attractive since they admit descriptions in terms of polyhedral geometry and are thus algorithm friendly.
Here \emph{stacky fans} form a key concept; these are pieced together from topological quotients of polyhedral cones by linear actions of finite groups.
The purpose of this note is to report on an implementation of stacky fans in \polymake \cite{DMV:polymake}.
As an application we study the embedding of tropical plane curves of genus $3$ into the moduli space of all tropical genus $3$ curves.

We are indebted to Daniel Corey for his comments on a draft of this note.

\section{Stacky fans}
Consider $X_1 \subset \RR^{m_1}, \dots , X_k \subset \RR^{m_k}$ full-dimensional rational open polyhedral cones each with an action of a finite subgroup $G_i \subseteq \GL_{m_i}(\ZZ)$.
A \emph{stacky fan} is a topological space $X$ and a collection of maps
\[\alpha_i : \overline{X_i} / G_i \longrightarrow X\]
satisfying several conditions not detailed here.
Most importantly, the images of the $\alpha_i$ cover $X$ and are homeomorphisms when restricted to the interior~$X_i$.
For the full and lengthy definition we refer the reader to \cite[Definition 3.2]{Chan:2012}.
Roughly, one can think of a stacky fan as a fan where the constituent cones are replaced by finite quotients of cones.

To realise a stacky fan in \texttt{polymake}, one can expand the definition of a cone, that is, we allow a cone to possess as a property a group with a group action.
Given this perspective, the implementation is simple.

\begin{figure}[tb]
  \begin{minipage}{0.40\textwidth}
    \scalebox{1}{
\begin{tabular}{|c|c|}
  \hline
  Link of cell      &   Group
  \\ \hline$X_1=$
 \scalebox{0.3}{
\begin{tikzpicture}[baseline={(x.base)}]

\draw [line width =0.2pt] (-2,0) -- (2,0);
\draw [line width =0.2pt] (0,3) -- (2,0);
\draw [line width =0.2pt] (-2,0) -- (0,3);

\node (x) at (0,1) {};
\node [circle, fill, inner sep=3pt] at (-2,0) {};
\node [circle, fill, inner sep=3pt] at (2,0) {};
\node [circle, fill, inner sep=3pt] at (0,3) {};
\addvmargin{3mm}
  \end{tikzpicture} }  &     $S_3$
  \\ \hline $X_2=$
   \scalebox{0.3}{
\begin{tikzpicture}[baseline={(x.base)}]

\draw [line width =0.2pt] (-2,0) -- (2,0);
\draw [line width =0.2pt] (0,3) -- (2,0);
\draw [line width =0.2pt] (-2,0) -- (0,3);
\node (x) at (0,1) {};
\node [circle, fill, inner sep=3pt] at (-2,0) {};
\node [circle, fill, inner sep=3pt] at (2,0) {};
\node [circle, fill, inner sep=3pt] at (0,3) {};
\addvmargin{3mm}
  \end{tikzpicture} }  &   $ \langle(u,v) \rangle$
  \\ \hline $X_3=$
  \scalebox{0.3}{
 \begin{tikzpicture}[baseline={(x.base)}]

\draw [line width =0.2pt] (-2,0) -- (2,0);
\node (x) at (0,-0.3) {};
\node [circle, fill, inner sep=3pt] at (-2,0) {};
\node [circle, fill, inner sep=3pt] at (2,0) {};
\addvmargin{3mm}
\end{tikzpicture} }  &    $S_2$
  \\ \hline$X_4=$
    \scalebox{0.3}{
 \begin{tikzpicture}[baseline={(x.base)}]

\draw [line width =0.2pt] (-2,0) -- (2,0);
\node (x) at (0,-0.3) {};
\node [circle, fill, inner sep=3pt] at (-2,0) {};
\node [circle, fill, inner sep=3pt] at (2,0) {};
\addvmargin{3mm}
\end{tikzpicture} }  &    $\1$
  \\ \hline$X_5=$
    \scalebox{0.3}{
 \begin{tikzpicture}[baseline={(x.base)}]
 \node (x) at (0,-0.3) {};
\node [circle, fill, inner sep=3pt] at (0,0) {};
\addvmargin{3mm}
\end{tikzpicture} }  &     $\1$
 \\ \hline$X_6=$
    \scalebox{0.3}{
 \begin{tikzpicture}[baseline={(x.base)}]
 \node (x) at (0,-0.3) {};
\node [circle, fill, inner sep=3pt] at (0,0) {};
\addvmargin{3mm}
\end{tikzpicture} }  &     $\1$
  \\ \hline
  \end{tabular}
}
  \end{minipage}
  \hfill
  \begin{minipage}{0.55\textwidth}
    \scalebox{1}{
\begin{tikzpicture}

\draw [line width =0.3pt, fill=black!20] (-3,0.5) -- (-1,0.5) -- (-2,2) -- (-3,0.5);
\draw [line width =0.3pt, fill=black!20] (-3,-0.5) -- (-1,-0.5) -- (-2,-2) -- (-3,-0.5);

\draw [line width =0.3pt, fill=black!20] (1,1) -- (3,1) -- (3,2) -- (1,1);
\draw [line width =0.3pt, fill=black!20] (1,1) -- (3,-2) -- (3,1);

\draw [dashed] (-2,2) -- (-2,0.5);
\draw [dashed] (-2,-2) -- (-2,-0.5);
\draw [dashed] (-1,0.5) -- (-2.5,1.25);
\draw [dashed] (-3,0.5) -- (-1.5,1.25);

\draw [line width =1.2] (1,1) -- (3,1);
\draw [line width =1.2] (1,1) -- (3,-2);

\node [circle, fill, inner sep=1pt] at (-3,0.5) {};
\node [left] at (-3,0.5) {$x$};
\node [circle, fill, inner sep=1pt] at (-1,0.5) {};
\node [right] at (-1,0.5) {$y$};
\node [circle, fill, inner sep=1pt] at (-2,2) {};
\node [above] at (-2,2) {$z$};
\node [circle, fill, inner sep=1pt] at (-3,-0.5) {};
\node [left] at (-3,-0.5) {$u$};
\node [circle, fill, inner sep=1pt] at (-1,-0.5) {};
\node [right] at (-1,-0.5) {$v$};
\node [circle, fill, inner sep=1pt] at (-2,-2) {};
\node [below] at (-2,-2) {$w$};

\node [circle, fill, inner sep=1.5pt] at (1,1) {};
\node [circle, fill, inner sep=1.5pt] at (3,-2) {};

\node at (2.5,1.3) {$C_1$};
\node at (2.5,0) {$C_2$};
\node at (1.8,-0.8) {$C_4$};
\node at (1.1,1.7) {$C_3$};
\node [left] at (1,1) {$C_5$};
\node [below] at (3,-2) {$C_6$};

\draw[->] (1.2,1.8)  to [out=40,in=90] (2,1);

\draw [->] (-0.5,0) -- (0.5,0);

\end{tikzpicture}
}
  \end{minipage}
  \caption{Cells and the link of the simplicial stacky fan $X$ from Example~\ref{exmp:delta-2}.}
  \label{fig:delta-2}
\end{figure}
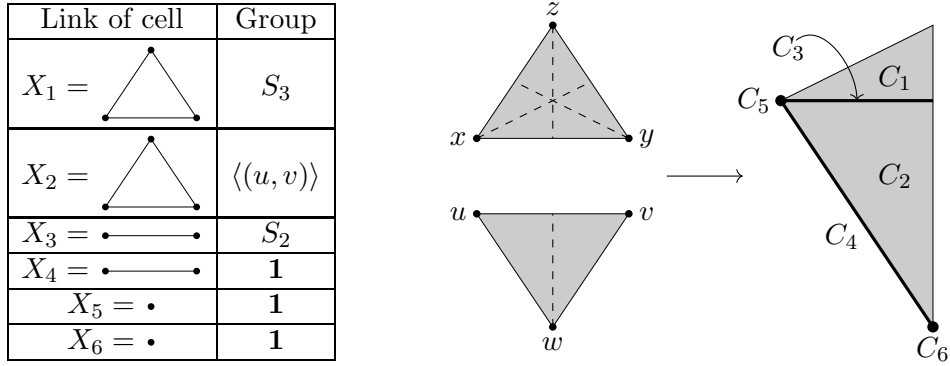

\begin{example}\label{exmp:delta-2}
  Consider the simplicial cones $X_1= \RR_{> 0}^{(x,y,z)}$ and $X_2 = \RR_{> 0}^{(u,v,w)}$ with actions of $G_1=\Sym\{x,y,z\}\cong\Sym(3)$ and $G_2=\langle (u\ v)\rangle \cong \Sym(2)$.
  Both quotient spaces
  \[C_1 := \faktor{X_1}{G_1} \quad \text{ and } \quad C_2 := \faktor{X_2}{G_2}\]
  can be identified with their respective fundamental domains.
  We then glue the face of $X_2$ which was folded in half by $G_2$ to any face of $X_1$; note that $\Sym(3)$ acts transitively on the set of facets of $X_1$.
  Denote the resulting space by $X$.
  This is well defined since the restriction of the actions to these faces agree.
  See Figure~\ref{fig:delta-2} for a complete description of the cells. 
  In the code below, we see how to express $C_2$ as a stacky fan in \polymake.
\begin{lstlisting}[language=pmshell]
($u,$v,$w) = (0,1,2);
$G2 = new group::PermutationAction(GENERATORS=>[[$v,$u,$w]]);
$X2 = new Cone(RAYS=>[[1,0,0],[0,1,0],[0,0,1]], GROUP=>
          new group::Group(HOMOGENEOUS_COORDINATE_ACTION=>$G2));
$C2 = stacky_fan($X2);
\end{lstlisting}
\end{example}

This implementation is apt given stacky fans' namesake, stacks, in particular quotient stacks, who remember the group action.
Stacky fans where each cone is simplicial and the groups act by permuting the unit vectors of $\RR^n$ are referred to as \emph{simplicial stacky fans}.

\begin{remark}
  Since each cone contains the origin, a stacky fan is always contractible.
  Usually, we will be interested in the \emph{link} (of the origin), which is defined as
  \[ \link(X) := \bigcup_{i=1}^k \alpha_i(X_i \cap H_i), \]
  where $H_i=\smallSetOf{x\in\RR^{m_i}}{\sum x_\ell=1}$ is the affine hyperplane which intersects the orthant $\RR_{\geq 0}^{m_i}$ in the standard simplex $\Delta^{(m_i-1)}$.
  The link of a simplicial stacky fan is a \enquote{symmetric $\Delta$-complex} in the sense of \cite{ChanGalatiusPayne:2021}.
  The topology of the link of a stacky fan is in general non-trivial and of great interest.
  Indeed, the homology of the link of the stacky fan moduli space of tropical curves describes certain pieces of the cohomology of moduli space of algebraic curves \cite{ChanGalatiusPayne:2021} to name just one application.
\end{remark}

Note that stacky fans differ from fans in two important ways:
there is no natural embedding into a vector space, and
the intersection of two faces can be a union of lower dimensional faces.
This is reminiscent of the distinction between simplicial complexes and $\Delta$-complexes.

We remark on one feature of the implementation.
\begin{remark}\label{remark:inherit-structure}
  Suppose that $P \subseteq \RR_{>0}^{n}$ is a polyhedral cone and let $G \subseteq \Sym(n)$ act on $\RR_{>0}^n$.
  One can consider the polyhedral complex generated by letting $G$ act on $P$ and all of the faces of $P$.
  Denote this complex $G \cdot P \subset \RR_{>0}^n$.
  Its image  in the quotient $\faktor{\RR_{>0}^n}{G}$ inherits the structure of a stacky fan.
  The data and structure of this stacky fan is contained inside the data of a cone with the property of a group action.
  Whilst it is not necessary for $P$ to be a fundamental domain for the action of $G$ on $G \cdot P$, there will be many computational advantages to this being the case.
  This construction of stacky fans appears in our motivating example of moduli of tropical plane curves.
\end{remark}

\subsection*{Barycentric subdivisions}
We now restrict our attention to simplicial stacky fans and their links.
A fundamental operation on stacky fans is the barycentric subdivision.
For instance, the second barycentric subdivision of a finite symmetric $\Delta$-complex yields a finite simplicial complex on which the group action is regular~\cite[Definition~II.1.2 and Exercise~III.5]{Bredon:1972}, which makes it a standard way to represent a topological space in the computer.

Any action of $G \subset \Sym(n)$ on $\RR^n_{>0}$ permutes the cones in the barycentric subdivision.
Moreover, no point of any one (relatively open) cell is in the orbit of any other point in that cell.
Thus the cells of the barycentric subdivision are mapped bijectively onto the image in the quotient.
One crucial remark is that a fundamental domain for the action is a union of barycentric cells.

The (maximal) cells of a barycentric subdivision of any finite cell complex corresponds to the set of (maximal) directed paths in the Hasse diagram of the complex.
Consequently, in \polymake barycentric subdivisions are implemented as a depth first search in a directed graph.

\section{Moduli of tropical curves}

A smooth tropical curve of genus $g$ is a trivalent metric graph on $2g - 2$ nodes with $3g - 3$ edges, which satisfies a certain \enquote{balancing condition}.
The moduli of such curves are the lengths of these $3g-3$ edges.
Let us recall the construction of the simplicial stacky fan which is the moduli space of tropical curves.

Fix such a trivalent graph $\Gamma$.
Then a parameter space for all possible edge lengths of $\Gamma$ is given by $\RR_{>0}^{|E(\Gamma)|} \cong \RR^{3g-3}_{>0}$.
To remove the redundancies in this parameter space we must consider the quotient by $\Aut(\Gamma)$ which we denote
\[C(\Gamma) = \faktor{\RR^{3g-3}_{>0}}{\Aut(\Gamma)} \enspace . \]
Gluing each of these cells together via contracting edge lengths we get the moduli space of tropical curves
\[\MM_g = \faktor{\coprod \overline{C(\Gamma)}}{\sim} \enspace .\]

For the full construction and detailed proof that this is, as claimed, a stacky fan, we refer to \cite{Chan:2012}.
The stacky fan constructed in Example \ref{exmp:delta-2} is in fact the moduli space of genus 2 tropical curves $\MM_2$.

A tropical plane curve is a tropical curve which can be realised in $\RR^2$.
These arise as dual graphs of regular subdivisions of lattice polygons.
We define the moduli space of tropical plane curves to be
\[\MM_g^{\pl} := \overline{\SetOf{C \in \MM_g}{ \exists C' \cong C \text{ planar }}} \enspace ,\]
where we take the topological closure in $\MM_g$.
The tropical plane curves of genus $g\leq 5$ have been classified by an extensive computation~\cite{BJMS15}; the arising list of graphs (without edge lengths) has been verified independently \cite[Corollary~1]{JoswigTewari:2002.02270}.

\begin{figure}[tb]\centering
  \scalebox{0.5}{
\begin{tikzpicture}
\coordinate (v0) at (2,0);
\coordinate (v1) at (6,0);
\coordinate (v2) at (4,1);
\coordinate (v3) at (4,3);
\draw (v0) -- (v1)node [midway, below] {\Large $v$};
\draw (v0) -- (v2)node [pos=0.6, above] {\Large $x$};
\draw (v0) -- (v3) node [midway, above left] {\Large $u$};
\draw (v1) -- (v2) node [pos=0.6, above] {\Large $z$};
\draw (v1) -- (v3) node [midway, above right] {\Large $w$};
\draw (v2) -- (v3) node [pos=0.4, left] {\Large $y$};
\node [circle, inner sep=2pt,fill] at (v0) {};
\node [circle, inner sep=2pt,fill] at (v1) {};
\node [circle, inner sep=2pt,fill] at (v2) {};
\node [circle, inner sep=2pt,fill] at (v3) {};
\node at (4,-0.75) {\Large $(000)$};

\coordinate (v4) at (7.5,0);
\coordinate (v5) at (10.5,0);
\coordinate (v6) at (7.5,3);
\coordinate (v7) at (10.5,3);
\draw (v4) -- (v5) node [midway,below] {\Large $x$};
\draw (v6) -- (v7) node [midway,above] {\Large $w$};
\draw (v4) to [out = 60, in= -60] (v6);
\draw (v4) to [out = 120, in= -120] (v6);
\draw (v5) to [out = 60, in= -60] (v7);
\draw (v5) to [out = 120, in= -120] (v7);
\node at (6.8,1.5) [] {\Large $u$};
\node at (8.2,1.5) [] {\Large $v$};
\node at (9.8,1.5) [] {\Large $y$};
\node at (11.2,1.7) [] {\Large $z$};
\node [circle, inner sep=2pt,fill] at (v4) {};
\node [circle, inner sep=2pt,fill] at (v5) {};
\node [circle, inner sep=2pt,fill] at (v6) {};
\node [circle, inner sep=2pt,fill] at (v7) {};
\node at (9,-0.75) {\Large $(020)$};

\coordinate (v8) at (12.5,0);
\coordinate (v9) at (12.5,3);
\coordinate (v10) at (14.5,1.5);
\coordinate (v11) at (15.5,1.5);
\draw (16.1,1.5) circle (0.6);
\draw (v10) -- (v11) node [midway,above] {\Large $y$};
\draw (v8) -- (v10) node [midway,below] {\Large $x$};
\draw (v9) -- (v10) node [midway,above] {\Large $w$};
\draw (v8) to [out = 60, in= -60] (v9);
\draw (v8) to [out = 120, in= -120] (v9);
\node at (11.8,1.3) {\Large $u$};
\node at (13.2,1.5) {\Large $v$};
\node at (16.1,2.1) [above] {\Large $z$};
\node [circle, inner sep=2pt,fill] at (v8) {};
\node [circle, inner sep=2pt,fill] at (v9) {};
\node [circle, inner sep=2pt,fill] at (v10) {};
\node [circle, inner sep=2pt,fill] at (v11) {};
\node at (14,-0.75) {\Large $(111)$};

\coordinate (v12) at (18.5,1);
\coordinate (v13) at (19.2,1);
\coordinate (v14) at (20.7,1);
\coordinate (v15) at (21.4,1);
\draw (18,1) circle (0.5);
\draw (21.9,1) circle (0.5);
\draw (v12)--(v13) node [midway, above] {\Large $v$};
\draw (v14)--(v15) node [midway, above] {\Large $y$};
\draw(v13) to [out=90,in=90](v14);
\draw(v13) to [out=-90,in=-90](v14);
\node at (19.875,0.25) [] {\Large $x$};
\node at (19.875,1.7) [] {\Large $w$};
\node at (17.25,1) {\Large $u$};
\node at (22.3,1.65) {\Large $z$};
\node [circle, inner sep=2pt,fill] at (v12) {};
\node [circle, inner sep=2pt,fill] at (v13) {};
\node [circle, inner sep=2pt,fill] at (v14) {};
\node [circle, inner sep=2pt,fill] at (v15) {};
\node at (19.875,-0.75) {\Large $(212)$};

\coordinate (v16) at (23.9,0.7);
\coordinate (v17) at (24.7,1.5);
\coordinate (v18) at (24.7,2.63137);
\coordinate (v19) at (25.5,0.7);
\draw (v16)--(v17) node [pos=0.35, above] {\Large $y$};
\draw (v18)--(v17) node [midway, left] {\Large $x$};
\draw (v19)--(v17) node [pos=0.3, above] {\Large $z$};
\draw (23.5,0.3) circle (0.55);
\draw (25.9,0.3) circle (0.55);
\draw (24.7, 3.18137) circle (0.55);
\node at (25.5,3.18137) {\Large $u$};
\node at (26.75,0.3) {\Large $w$};
\node at (22.65,0.3) {\Large $v$};
\node [circle, inner sep=2pt,fill] at (v16) {};
\node [circle, inner sep=2pt,fill] at (v17) {};
\node [circle, inner sep=2pt,fill] at (v18) {};
\node [circle, inner sep=2pt,fill] at (v19) {};
\node at (24.7,-0.75) {\Large $(303)$};
\end{tikzpicture}
}
\caption{The five trivalent graphs of genus 3, with labelled edges}
\label{fig:g3}
\end{figure}
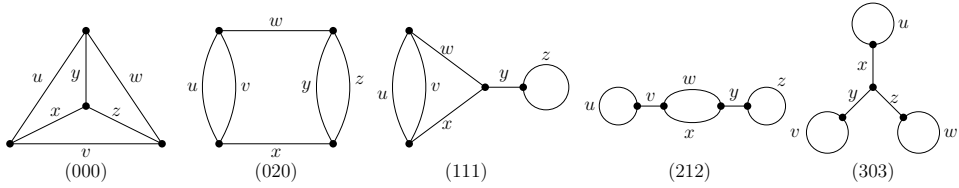

\subsection*{Genus $3$}

For $g=3$ there are five trivalent graphs (Figure~\ref{fig:g3}),
which correspond to five maximal dimensional cells in~$\MM_3$.
The space $\link(\MM_3)$ was shown to be homotopy equivalent to $S^5$ in \cite[Theorem 1.2]{ACP19}.
In fact, the authors prove that the union of all cells of $\link(\MM_3)$ other than the open cell~$\link(\MM_{K_4})$ containing the $K_4$-curves is a contractible subcomplex.
Subsequently it is shown that $\link(\MM_{K_4})$ is homotopy equivalent to a sphere.

Let us turn our attention to the moduli of tropical plane curves $\MM_3^{\pl}$.
Non-hyperelliptic algebraic genus 3 curves are plane quartics.
Analogously, all non-hyperelliptic smooth tropical plane curves of genus 3 are dual to triangulations of the triangle $T_4 = \conv\{(0,0),(4,0),(0,4)\}$.
We denote the corresponding moduli space by $\MM_{T_4}$.
These spaces are fairly intricate objects.
For instance, although $\dim \MM_{T_4} = \dim \MM_3 = 6$, the space $\MM_3^{\pl}$ is not pure dimensional due to realisable metrics on {\rm (111)}.
One interesting feature of this moduli space is that the graph {\rm (303)} does not appear in $\MM_3^{\pl}$; it cannot be realised in $\RR^2$, not even as a non-smooth tropical curve.
For the full computation we refer to \cite[Theorem 5.1]{BJMS15}.

Let us briefly discuss the remaining four cells of $\MM_3^{\pl}$ which do occur.
One can construct by hand a homeomorphism between the moduli cells $\MM_{212}^{\pl} \cong \MM_{212}$ of tropical curves of type {\rm (212)} and the planar locus.
Hence, we conclude that $\MM_{212}^{\pl}$ is contractible following \cite[Proposition 5.1]{ACP19}.
The next two cells, corresponding to {\rm (020)} and {\rm (111)}, have a more complicated structure, and this is beyond the scope of this article.
The subsequent section describes the embedding of the final cell $\MM^{\pl}_{000}$ into $\MM_3$.

\subsection*{Honeycombs and $K_4$-curves}

A $K_4$-curve is an algebraic curve whose tropicalisation has as a skeleton the complete graph on four vertices, which is labeled (000) in Figure~\ref{fig:g3}; see \cite{ChanPakawut:2017} for a detailed study of $K_4$-curves and their tropicalisations.
The locus of the tropicalisations of $K_4$-curves in $\MM_3$ occupies the maximal dimensional cell $\MM_{000}$, which we shall denote by $\MM_{K_4}$.
Let us describe $\MM_{K_4}$ explicitly.

Let $X_1 = \RR_{\geq 0}^6$ and label the coordinates $x,y,z,u,v,w$.
The group
\begin{equation}\label{eq:honeycomb-group}
  G = \langle (u\ v)(y\ z) ,\, (v\ w)(x\ y) ,\, (v\ x)(w\ y) \rangle
\end{equation}
is isomorphic to $\Sym(4)$; yet, as the three generators are double transpositions, $G$ is contained in the alternating group of degree six.
Then we have
\begin{equation}
  \MM_{K_4} = X_1 / G \enspace .
\end{equation}

The group $G$ permutes the six edges of the graph $K_4$.
In our setup, it is determined automatically as the automorphism group of~$K_4$,
which we specify in turn via its vertex-edge incidence matrix:
\begin{lstlisting}[language=pmshell]
($u, $v, $w, $x, $y, $z) = 0..5;
$skeleton = new IncidenceMatrix(
                [[$u,$v,$x],[$v,$w,$z],[$u,$w,$y],[$x,$y,$z]]);
$TG_K4 = new tropical::Curve(EDGES_THROUGH_VERTICES=>$skeleton);
\end{lstlisting}
With this we can find the f-vector of the corresponding stacky fan as follows:
\begin{lstlisting}[language=pmshell]
print tropical::stacky_fan($TG_K4)->STACKY_F_VECTOR;  
\end{lstlisting}
\begin{lstlisting}[language=pmoutput]
1 2 3 2 1
\end{lstlisting} 
The second barycentric subdivision of the link is a finite simplicial complex.
After appropriately identifying on the boundary of this complex, we obtain the moduli cell corresponding to $K_4$:
\begin{lstlisting}[language=pmshell]
$moduli_k4 = $TG_K4->MODULI_CELL;
print $moduli_k4->HOMOLOGY;
\end{lstlisting}
\begin{lstlisting}[language=pmoutput]
({} 0)
({} 0)
({} 0)
({} 0)
({} 0)
({} 1)
\end{lstlisting}
That output is the reduced integral homology of the $5$-sphere, which agrees with the aforementioned fact \cite{ACP19} that $\link(\MM_{K_4}) \simeq S^5$.
To get an idea of the size of the computation, we can access the face numbers of the moduli cell:
\begin{lstlisting}[language=pmshell]
print $moduli_k4->F_VECTOR;
\end{lstlisting}
\begin{lstlisting}[language=pmoutput]
356 6716 34320 71160 64800 21600
\end{lstlisting}

The \emph{honeycomb curves} form the locus in $\MM_3$ of tropically smooth $K_4$ plane curves.
The name is derived from the \enquote{honeycomb triangulations}; see the Section \enquote{Honeycombs} in \cite{BJMS15}.
The locus of honeycomb curves in $\MM_{K_4}$, computed in \cite[Theorem 5.1]{BJMS15}, is the polyhedral cone in $X_1$ defined by
\[\max\{x,y\} \leq u, \quad \max\{x,z\} \leq v \quad \text{ and }\quad \max\{y,z\} \leq w \enspace .\]
We denote this locus by $P\subset \RR_{\geq 0}^6$ and, abiding by the terminology in \cite{BJMS15}, call $P$ the \emph{honeycomb cone}.
As in Remark \ref{remark:inherit-structure}, $P$ defines a stacky fan, 
\[\MM_{K_4}^{\pl} \subset \MM_{K_4} \enspace .\]
This is the honeycomb locus embedded into the moduli space of tropical $K_4$-curves.
It turns out that the honeycomb locus has a fundamental domain which is convex and a union of barycentric cells; see Remark \ref{remark:inherit-structure}.
This will enable us, combined with~\cite{Bredon:1972} and the barycentric subdivision function, to compute the topology of $\MM_{K_4}^{\pl}$.
The incidence matrix is the same as above.
\begin{lstlisting}[language=pmshell]
($U,$V,$W,$X,$Y,$Z) =
  map { new Vector($_ ) } @{rows(unit_matrix(6))};
$D_ineqs = new Matrix([$U-$X, $U-$Y, $V-$X, $V-$Z, $W-$Y, $W-$Z]);
$TK4_planar = new tropical::Curve(
  EDGES_THROUGH_VERTICES=>$skeleton, INEQUALITIES=>$D_ineqs);
\end{lstlisting}
Now that we have the tropical curve as an object, we can compute its stacky f-vector, and the homology and f-vector of the moduli cell $\MM_{K_4}^{\pl}$:
\begin{lstlisting}[language=pmshell]
print stacky_fan($TK4_planar)->STACKY_F_VECTOR;
\end{lstlisting}%
\begin{lstlisting}[language=pmoutput]
4 8 10 7 2
\end{lstlisting}
\begin{lstlisting}[language=pmshell]
$mod_honey = $TK4_planar->MODULI_CELL;
print $mod_honey->F_VECTOR;
\end{lstlisting}
\begin{lstlisting}[language=pmoutput]
3809 72766 360654 723696 639360 207360
\end{lstlisting}
\begin{lstlisting}[language=pmshell]
print $mod_honey->HOMOLOGY;
\end{lstlisting}
\begin{lstlisting}[language=pmoutput]
({} 0)
({} 0)
({} 0)
({} 0)
({} 0)
({} 0)
\end{lstlisting}
The above computation is our first new result.
It says that the honeycomb locus is homologically trivial.
In fact, the next computation is even more interesting.
\begin{lstlisting}[language=pmshell]
print topaz::random_discrete_morse($mod_honey,rounds=>1);
\end{lstlisting}
\begin{lstlisting}[language=pmoutput]
{(<1 0 0 0 0 0> 1)}
\end{lstlisting}
This requires an explanation: Forman's discrete Morse theory provides fundamental methods to study the topology of a simplicial complex via its combinatorics and conversely \cite{Forman:1998}.
Bendetti and Lutz \cite{BenedettiLutz:2014} suggested a procedure to construct random discrete Morse functions, e.g., to certify that a complex is collapsible, a combinatorial property which is strictly stronger than contractibility.
Later this became part of a general heuristics for recognising spheres and balls \cite{JoswigLofanoLutzTsuruga:1405.3848}, which is implemented in \polymake.
The output above is a histogram of \enquote{discrete Morse vectors}.
Here we get one such vector, indicating a single cell in dimension zero, which is a point.
This provides a certificate that $\MM_{K_4}^{\pl}$ is collapsible.
For other input that random search may get stuck in a local minimum, whence it can be useful to increase the number of rounds.
Here this was unnecessary.
So that computation proves our main result:

\begin{theorem}\label{theorem:honeycomb-curves}
  The stacky fan $\MM_{K_4}^{\pl}$ of tropical honeycomb curves has face vector $(4,8,10,7,2)$, and the link $\link(\MM_{K_4}^{\pl})$ is contractible.
\end{theorem}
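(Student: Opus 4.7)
The plan is two-fold: first establish the face vector by orbit enumeration on the faces of the fundamental domain $D$, and then compute the homology of $\link(\MM_{K_4}^{\pl})$ via the symmetric crosscut complex.

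For the face vector, note that since $D$ is a strict fundamental domain for the $G$-action on $P'$, the cells of $\MM_{K_4}^{\pl}$ are in bijection with $G$-orbits of faces of $D$; two faces can lie in the same orbit only if both are contained in $\partial D$. I would identify $G \cong \Sym(4)$ with the automorphism group of the graph $K_4$---with the six coordinates $u,v,w,x,y,z$ corresponding to the six edges, in line with the labeling of graph {\rm (000)} in Figure~\ref{fig:g3}---so that $G$-orbits of subsets of $\{u,v,w,x,y,z\}$ are classified by subgraph isomorphism type. Applied to the eleven rays of $D$ listed above, this distributes them into seven isomorphism classes: single edge ($u$), path of length two ($uv$), triangle ($uvw$), star $K_{1,3}$ ($uvx$), tadpole (the three rays $uvwz$, $uvwy$, $uvwx$), $K_4$ minus an edge (the three rays $uvwyz$, $uvwxz$, $uvwxy$), and all of $K_4$ ($\1$), giving the first entry~$7$. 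Running the analogous orbit enumeration dimension by dimension inside \polymake yields the counts $18, 25, 21, 8$; the computation is recorded at \cite{BunnettJoswig:2021}, and the top-dimensional count $8$ already matches the eight maximal flags exhibited above since the $G$-action is free on the interiors of maximal cells.

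For the homology claim, I would pass to the symmetric crosscut complex $\crosscut(\link(\MM_{K_4}^{\pl}))$ provided by Theorem~\ref{theorem:symmetric-crosscut}, which is homotopy equivalent to $\link(\MM_{K_4}^{\pl})$ and inherits a CW-structure from the orbit data assembled above. After refining to a symmetric triangulation as in Corollary~\ref{corollary:pair}, so that orientations can be tracked consistently via a total order on the rays of the ambient triangulation, one assembles the cellular boundary matrices and reads $H_*(\link(\MM_{K_4}^{\pl});\ZZ)$ off their Smith normal forms.

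The main obstacle is not the combinatorial face count but this homology computation. A priori the link is a four-dimensional symmetric $\Delta$-complex with $79$ cells on which the group action may reverse orientations, and the refinements from Theorem~\ref{theorem:pair} and Corollary~\ref{corollary:pair} may inflate the boundary matrices considerably. Handling this systematically is precisely what motivates the \polymake implementation referenced earlier; once the boundary matrices are in hand, verifying that every positive-degree homology group vanishes reduces to a routine Smith normal form computation.
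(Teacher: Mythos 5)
Your proposal takes essentially the same route as the paper: the paper likewise reads the face vector off $G$-orbits of faces of the fundamental domain $D$ and defers both the orbit enumeration and the homology of the link to the \polymake computation at \cite{BunnettJoswig:2021}, built on Theorem~\ref{theorem:pair} with the observation that the first and second overlays are trivial because $D$ is a union of chambers of the braid arrangement. One small caution: your side remark matching the trailing $8$ to the eight maximal barycentric flags conflates two different counts --- comparing $(7,18,25,21,8)$ with the $f$-vector $(11,31,39,25,8)$ of $D$, that last entry records orbits of the eight five-dimensional facets of $D$ (no two of which are identified), not orbits of six-dimensional cells of the barycentric refinement, though this does not affect your method since the intermediate counts are deferred to the computation anyway.
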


\subsection*{Computational complexity}
On a single thread on a 6816.61 BogoMips Intel Core i7-6700 CPU at 3.4 GHz, the computation time for the simplicial complex itself that triangulates the moduli cell is negligible;
the homology computation takes 4 seconds;
and the computation of the random discrete Morse functions takes around seven minutes.
To put this into perspective the following results are known:
By recent work of Tancer, deciding if a simplicial complex is collapsible is NP-complete \cite{Tancer:2016}.
Moreover, it is known to be undecidable to verify whether a complex is contractible.

\subsection*{Outlook}
In view of the known results for genus $g=1,2$ and our partial results for $g=3$, we ask the following.
Note also that $\MM_g^{\pl}$ is a subset of measure zero in $\MM_g$ for $g\geq 5$.
\begin{question}
  Is the link of the locus of tropical plane curves in $\MM_g$ always contractible, for arbitrary genus $g$?
\end{question}

It is tempting to address that question computationally.
Yet we found it challenging to extend the range of our methods beyond the cases presented here.
For instance, the next logical step would be to analyse the cells of type {\rm (020)} and {\rm (111)} in $\MM_3^{\pl}$.

\bibliographystyle{acm}
\bibliography{bib}

\end{document}